\newtheorem{theorem}{Theorem}[section]
\newtheorem{lemma}[theorem]{Lemma}
\newtheorem{remark}{Remark}[section]
\newcommand{\func}[1]{\operatorname{#1}}
\newenvironment{proof}[1][Proof]{\noindent\textbf{#1.} }{\ \rule{0.5em}{0.5em}}
\numberwithin{equation}{section}
\begin{document}

\title{Integro-differential equations linked to compound birth processes
with infinitely divisible addends}
\author{Luisa Beghin\thanks{%
Sapienza University of Rome, P.le A. Moro 5, 00185 Roma, Italy. e-mail:
\texttt{luisa.beghin@uniroma1.it}} \and Janusz Gajda\thanks{%
University of Warsaw, Poland} \and Aditya Maheshwari\thanks{%
Indian Institute of Management Indore, India} }
\date{}
\maketitle

\begin{abstract}
Stochastic modelling of fatigue (and other material's
deterioration), as well as of cumulative damage in risk theory,
are often based on compound sums of independent random variables,
where the number of addends is represented by an independent
counting process. We consider here a cumulative model where,
instead of a renewal process (as in the Poisson case), a linear
birth (or Yule) process is used. This corresponds to the
assumption that the frequency of \textquotedblleft damage"
increments accelerates according to the increasing number of
\textquotedblleft damages". We start from the partial differential
equation satisfied by its transition density, in the case of
exponentially distributed addends, and then we generalize it by
introducing a space-derivative of convolution type (i.e. defined
in terms of the Laplace exponent of a subordinator). Then we are
concerned with the solution of integro-differential equations,
which, in particular cases, reduce to fractional ones.
Correspondingly, we analyze the related cumulative jump processes
under a general infinitely divisible distribution of the
(positive) jumps. Some special cases (such as the stable, tempered
stable, gamma and Poisson) are presented.\\

\textbf{Keywords}: Integro-differential equations,
Convolution-type derivatives, Cumulative damage models,
First-passage time, Infinitely divisible laws.

\noindent \emph{AMS Mathematical Subject Classification (2010).} 35R09,
35R11, 60G50, 33E12, 26A33.
\end{abstract}

\section{Introduction}

Compound counting processes, especially in the Poisson case, are widely
studied and applied in many different fields: in reliability theory (for
studying the development of fatigue in materials) and in collective risk
theory, where many cumulative damage models are defined in terms of the
following sum%
\begin{equation}
Y(t):=\sum_{j=0}^{N(t)}X_{j}.  \label{1}
\end{equation}%
The addends $X_{j}$ are assumed to be i.i.d. random variables, for $%
j=1,2,... $, and $N:=\{N(t),t\geq 0\}$ is an independent counting (i.e. a
non-negative, integer valued and non-decreasing) stochastic process. In the
special case where $N$ is a homogeneous Poisson process, with rate $\lambda
, $ the well-known compound Poisson process is obtained.

Under the assumption that the counting process $N$ is Poisson with parameter
$\lambda >0,$ and the jumps are exponentially distributed with parameter $%
\xi $, it has been proved in \cite{BEMA} that the distribution of the
compound Poisson process defined in (\ref{1}), can be written in terms of a
Wright function. Moreover the density of its absolutely continuous component
$f_{Y}(y,t),$ for $y,t>0$ satisfy the following differential equation
\begin{equation}
\xi \frac{\partial }{\partial t}f=-\left[ \lambda +\frac{\partial }{\partial
t}\right] \frac{\partial }{\partial y}f,  \label{comp5}
\end{equation}%
with conditions%
\begin{equation}
\left\{
\begin{array}{l}
f(y,0)=0 \\
\int_{0}^{+\infty }f(y,t)dy=1-e^{-\lambda t}%
\end{array}%
\right. .  \label{die1}
\end{equation}%
This process has been generalized in \cite{BEMA} to the fractional case and
in \cite{LUD} the assumption of Gamma distributed jumps has been considered.

We consider here the cumulative process (\ref{1}), when the number of
addends, instead of being represented by a renewal counting process (as in
the Poisson case), is assumed to be a linear birth (or Yule) process $%
B:=\{B(t),t\geq 0\}$, with one progenitor and with rate $\lambda
_{k}=k\lambda $, for $k=1,2,...,$ i.e.%
\begin{equation}
Y(t)=\sum_{j=1}^{B(t)}X_{j},\qquad t\geq 0,  \label{bi}
\end{equation}%
under the assumption that $X_{j}$ are i.i.d. positive random variables,
independent from $B.$ This model can be also described by assuming that each
member of a population gives birth independently to one offspring at an
exponential time with rate $\lambda $. Moreover each member of the
population produces a random (positive) \textquotedblleft damage" that
contribute individually to the \textquotedblleft total damage" of the
population. Alternatively, we can assume that $X_{j}$'s represent the claim
size of the $j$-th policy holder and that the number of claims for the
insurance company evolves in time according to a birth process $B(t)$, i.e.
when, for example, the arrival rate of the new claims is proportional to the
number of the claims previously arrived. In this case a crucial random
variable is represented by the time to ruin of the company, i.e.%
\begin{equation*}
\tau :=\inf \{t:Y(t)>ct+u\},\qquad u,c>0,
\end{equation*}%
where $u$ is the initial capital and $c$ is constant risk premium rate. For
applications to the risk theory of compound birth processes, see the very
recent paper \cite{MIN}.

This kind of process has been introduced by \cite{SOB} for modelling crack
growth with accelerating frequency of increments as the crack grows. Also in
the cumulative models applied to reliability theory, the study of the
first-passage time through a certain critical value, i.e. $T_{\beta }=\inf
\{t\geq 0:Y(t)>\beta \}$, is crucial. In particular, $T_{\beta }$ is the
fatigue failure time, in the cumulative fatigue model. Since, in case of
non-negative addends, $Y(t)$ is non-decreasing for any $t$, we get%
\begin{equation}
P\{T_{\beta }>t\}=P\{Y(t)<\beta \}=F_{Y}(\beta ,t).  \label{rel}
\end{equation}

We recall that the birth process $B$ is a continuous-time Markov process
and, in the linear case, its probability mass function is given by

\begin{equation*}
p_{n}(t):=P\left\{ \left. B(t)=n\right\vert B(0)=1\right\} =e^{-\lambda
t}\left( 1-e^{-\lambda t}\right) ^{n-1},\qquad t\geq 0,\;n=1,2,...
\end{equation*}%
In the fractional case, the birth processes have been studied in \cite{ORPO}%
, \cite{ORPO3} and \cite{CAPO}. In \cite{LAN} a non-markovian generalization
of the Yule process has been introduced.

We start by deriving the partial differential equation satisfied by the
transition density of (\ref{bi}) in the case of exponentially distributed
addends; then we generalize it by introducing a space-derivative $\mathcal{D}%
_{y}^{g}$ of convolution-type (defined by means of the Laplace exponent $%
g(\theta )$ of a subordinator). More precisely, we will be concerned with
the the solution to the integro-differential equation%
\begin{equation}
\frac{\partial }{\partial t}f(y,t)=-\lambda \frac{e^{\lambda t}}{\xi }%
\mathcal{D}_{y}^{g}\left[ f\ast f\right] (y,t),\qquad y,t,\lambda ,\xi >0,
\label{oo}
\end{equation}%
under certain initial conditions, where we denote by $f_1 \ast f_2$ the
convolution of the functions $f_1$ and $f_2.$ For the (integral) definition
of $\mathcal{D}_{y}^{g}$ see (\ref{toa}) below.

We will prove that the solution of (\ref{oo}) coincides with the density $%
f_{Y_{g}}$ of (\ref{bi}) when the distribution of the addends is
extended (from the exponential case) inside the class of
infinitely divisible laws. Note that, for
$g(\theta)=\theta^{\alpha}$ and $\alpha \in (0,1)$, the derivative
$\mathcal{D}_{y}^{g}$ reduces to a Caputo fractional derivative of
order $\alpha$ (see (\ref{toa}) below) and then equation
(\ref{oo}) becomes a fractional differential equation. Some
special cases (such as the stable, tempered stable, gamma and
Poisson cases) will be illustrated. As we will see, our model will
prove to be more flexible and adaptable to the real data provided
that the appropriate distributions of the addends and the
corresponding parameters' values are chosen.

\

Let $g:(0,+\infty )\rightarrow \mathbb{R}$ be a Bernstein function, i.e. let
$g$ be non-negative, infinitely differentiable and such that, for any $x\in
(0,+\infty ),$%
\begin{equation*}
(-1)^{n}\frac{d^{n}}{dx^{n}}g(x)\leq 0,\qquad \text{for any }n\in \mathbb{N}%
_{.}
\end{equation*}%
A function $g$ is a Bernstein function if and only if it admits the
following representation%
\begin{equation*}
g(x)=a+bx+\int_{0}^{+\infty }(1-e^{-sx})\overline{\nu }(ds),
\end{equation*}%
for $a,b\in \mathbb{R},$ where $\overline{\nu }$ is the corresponding L\'{e}%
vy measure and $\left( a,b,\overline{\nu }\right) $ is called the L\'{e}vy
triplet of $g$. Then a subordinator is the stochastic process with
non-decreasing paths $\mathcal{A}_{g}:=\left\{ \mathcal{A}_{g}(t),t\geq
0\right\} ,$ such that%
\begin{equation}
\mathbb{E}e^{-\theta \mathcal{A}_{g}(t)}=e^{-g(\theta )t},  \label{sub}
\end{equation}%
i.e. $g(\theta )$ is the Laplace exponent of $\mathcal{A}_{g}.$ Let moreover
$\mathcal{L}_{g}(t),$ $t\geq 0,$ be its inverse, i.e.
\begin{equation*}
\mathcal{L}_{g}(t)=\inf \left\{ s\geq 0:\mathcal{A}_{g}(s)>t\right\} ,\qquad
t>0
\end{equation*}%
and $l_{g}(x,t)=\Pr \left\{ \mathcal{L}_{g}(t)\in dx\right\} /dx$ be its
transition density.

We recall the definition of the convolution-type derivative on the positive
half-axes, in the sense of Caputo (see \cite{TOA}, Def.2.4, for $b=0$) :%
\begin{equation}
\mathcal{D}_{t}^{g}u(t):=\int_{0}^{t}\frac{d}{ds}u(t-s)\nu (s)ds,\qquad t>0,
\label{toa}
\end{equation}%
where $\nu $ is the tail of the L\'{e}vy measure $\overline{\nu },$ i.e. $%
\nu (s)=\int_{s}^{+\infty }\overline{\nu }(dz)$. Convolution-type
derivatives (or derivatives defined as integrals with memory kernels) have
been treated recently by many authors: see, among the others, \cite{KOC},
\cite{GAJ}, \cite{TOA2}.

The Laplace transform of $\mathcal{D}_{t}^{g}$ is given by
\begin{equation}
\int_{0}^{+\infty }e^{-\theta t}\mathcal{D}_{t}^{g}u(t)dt=g(\theta )%
\widetilde{u}(\theta )-\frac{g(\theta )}{\theta }u(0),\qquad \mathcal{R}%
(\theta )>\theta _{0},  \label{lapconv}
\end{equation}%
(see \cite{TOA}, Lemma 2.5). It is easy to check that, in the trivial case
where $g(\theta )=\theta $, the convolution-type derivative coincides with
the first-order derivative, while, for $g(\theta )=\theta ^{\alpha },$ for $%
\alpha \in (0,1)$, it coincides with the Caputo fractional derivative (see
e.g. \cite{KIL}, p.90) of order $\alpha .$

\section{The exponential case}

As a preliminary result, we consider the case of the compound birth process $%
Y$ with exponentially distributed addends: see \cite{CRO} and \cite{SOB} for
possible applications, in survival analysis and reliability theory,
respectively.

\begin{lemma}
Let $X_{j}$ be i.i.d. $Exp(\xi )$, for $j=1,2,...$, then the density of $Y,$
defined in (\ref{bi}), i.e.%
\begin{equation}
f_{Y}(y,t)=\xi \exp \left\{ -\lambda t-\xi e^{-\lambda t}y\right\} ,\qquad
y,t\geq 0,  \label{den}
\end{equation}%
satisfies the equation%
\begin{equation}
\frac{\partial }{\partial t}f=-\lambda \frac{\partial }{\partial y}\left(
yf\right) ,\qquad y,t\geq 0,  \label{eq}
\end{equation}%
with $f(y,0)=f_{X}(y)=\xi e^{-\xi y}.$
\end{lemma}

\begin{proof}
By a conditioning argument, we can write%
\begin{equation*}
f_{Y}(y,t)=\sum_{n=1}^{\infty }p_{n}(t)f_{X}^{\ast (n)}(y),
\end{equation*}%
which, under Laplace transform, gives%
\begin{eqnarray}
\widetilde{f}_{Y}(\theta ,t) &=&\sum_{n=1}^{\infty }p_{n}(t)\widetilde{f}%
_{X}^{\ast (n)}(\theta )=\sum_{n=1}^{\infty }p_{n}(t)\left[ \widetilde{f}%
_{X}(\theta )\right] ^{n}  \label{den2} \\
&=&e^{-\lambda t}\sum_{n=1}^{\infty }\left( 1-e^{-\lambda t}\right) ^{n-1}
\left[ \frac{\xi }{\xi +\theta }\right] ^{n}=\frac{\xi e^{-\lambda t}}{%
\theta +\xi e^{-\lambda t}}.  \notag
\end{eqnarray}%
It can be easily checked that (\ref{den}) satisfies equation (\ref{eq}) and
the initial condition.
\end{proof}

In Fig.1 we plot the probability density function (hereafter pdf) of the
process $Y$, defined in (\ref{bi}) (in the case of exponentially distributed
addends), estimated directly from the realizations of the process. Moreover
we compare it with the theoretical pdf given in (\ref{den}), for different
values of $t$. One can notice the perfect agreement between the theoretical
and empirical pdf's.
\begin{figure}[th]
\centering
\includegraphics[width=.5\textwidth, height
=.3\textwidth]{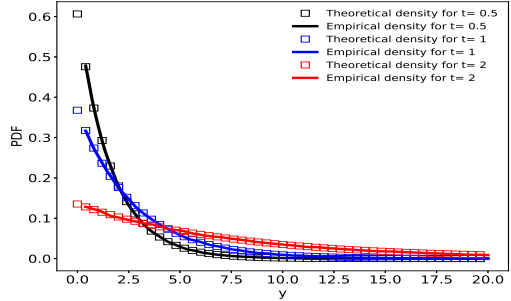}
\caption{Compound birth process $Y$, with exponential distributed addends:
theoretical and empirical pdf's for different values of $t$ and for $\protect%
\lambda =\protect\xi =1$.}
\label{figure1}
\end{figure}

\begin{remark}
Equation (\ref{eq}) can be considered as special case of\ the Fokker-Planck
equation with null diffusion coefficient. Moreover, it can be alternatively
written as%
\begin{equation}
\frac{\partial }{\partial t}f=-\lambda \frac{e^{\lambda t}}{\xi }\frac{%
\partial }{\partial y}\left( f\ast f\right) ,\qquad y,t\geq 0.  \label{gen}
\end{equation}%
Indeed, we have that%
\begin{eqnarray*}
\frac{e^{\lambda t}}{\xi }\frac{\partial }{\partial y}\left( f_{Y}\ast
f_{Y}\right) &=&\frac{e^{\lambda t}}{\xi }\xi ^{2}e^{-2\lambda t}\frac{%
\partial }{\partial y}\int_{0}^{y}\exp \left\{ -\xi e^{-\lambda t}z\right\}
\exp \left\{ -\xi e^{-\lambda t}(y-z)\right\} dz \\
&=&\xi e^{-\lambda t}\frac{\partial }{\partial y}\left( y\exp \left\{ -\xi
e^{-\lambda t}y\right\} \right) =\frac{\partial }{\partial y}\left(
yf_{Y}\right) .
\end{eqnarray*}%
In the next section, we will generalize the governing equation of our model
in the form given in (\ref{gen}).
\end{remark}

The distribution of the first-passage time of $Y$ through the level $\beta
>1 $ can be written as follows:
\begin{equation}
P\{T_{\beta }<t\}=\left\{
\begin{array}{l}
0,\qquad t\leq 0 \\
\exp \left\{ -\xi e^{-\lambda t}\beta \right\} ,\qquad t>0%
\end{array}%
\right.  \label{df}
\end{equation}%
by considering (\ref{rel}). The jump in zero of (\ref{df}) is equal to $%
P\{T_{\beta }=0\}=P\{X>\beta \}=e^{-\xi \beta }$. The absolutely continuous
component thus displays a Gumbel-type distribution. Its mean value can be
obtained as follows:%
\begin{eqnarray}
\mathbb{E}T_{\beta } &=&\int_{0}^{+\infty }\left[ 1-\exp \left\{ -\xi
e^{-\lambda t}\beta \right\} \right] dt=-\sum_{j=1}^{\infty }\frac{(-\xi
\beta )^{j}}{j!}\int_{0}^{+\infty }e^{-\lambda tj}dt  \label{et} \\
&=&-\frac{1}{\lambda }\sum_{j=1}^{\infty }\frac{(-\xi \beta )^{j}}{%
j^{2}(j-1)!}=\frac{\xi \beta }{\lambda }\sum_{l=0}^{\infty }\frac{(-\xi
\beta )^{l}}{l!}\frac{\Gamma (l+1)^{2}}{\Gamma (l+2)^{2}}  \notag \\
&=&\frac{\xi \beta }{\lambda }\,_{2}\Psi _{2}\left[ \left. -\xi \beta
\right\vert
\begin{array}{cc}
(1,1) & (1,1) \\
(2,1) & (2,1)%
\end{array}%
\right]  \notag
\end{eqnarray}%
where
\begin{equation*}
_{p}\Psi _{q}\left[ \left. x\right\vert
\begin{array}{c}
(a_{l},\alpha _{l})_{1,p} \\
(b_{l},\beta _{l})_{1,q}%
\end{array}%
\right] \text{,\qquad }x,a_{l},b_{j}\in \mathbb{C},\;\alpha _{l},\beta
_{j}\in \mathbb{R},\;l=1,...,p,\;j=1,...,q
\end{equation*}%
for $p,q\in \mathbb{N},$ is the Fox-Wright function (see \cite{KIL}, p.56).
The asymptotic behavior of $\mathbb{E}T_{\beta }$ can be studied by
considering formula (1.12.68), p.67 in \cite{KIL}, so that we can write%
\begin{equation*}
\mathbb{E}T_{\beta }=\frac{\xi \beta }{\lambda }\,H_{2,3}^{1,2}\left[ \left.
\xi \beta \right\vert
\begin{array}{cc}
(0,1) & (0,1) \\
(0,1) & (-1,1)%
\end{array}%
\begin{array}{c}
\, \\
(-1,1)%
\end{array}%
\right] ,
\end{equation*}%
where
\begin{equation*}
H_{p,q}^{m,n}\left[ \left. x\right\vert
\begin{array}{c}
(a_{p},A_{p}) \\
(b_{q},B_{q})%
\end{array}%
\right] \text{,\qquad }x,a_{i},b_{j}\in \mathbb{C},\;A_{l},B_{j}\in \mathbb{R%
}^{+},\;i=1,...,p,\;j=1,...,q
\end{equation*}%
for $p,q,m,n\in \mathbb{N}$, with $0\leq n\leq p$, $1\leq m\leq q$, is the
H-function (see e.g. \cite{MAT}, p.2). Then, by applying Theorem 1.2, p.19
in \cite{MAT}, for $\mu =\alpha =1>0$ and $d=\min \left\{ -1,-1\right\} =-1$
we get, for $\beta \rightarrow +\infty ,$%
\begin{equation*}
\mathbb{E}T_{\beta }\simeq \frac{\xi \beta }{\lambda }O(\beta ^{-1})=\frac{%
\xi }{\lambda }O(1).
\end{equation*}

\section{The infinitely divisible cases}

We now extend the previous results, by considering the relaxation equation
with the convolution-type derivative defined in (\ref{toa}). Indeed, it is
well-known that the survival (or reliability) function $\Phi
_{X}(x):=\int_{x}^{+\infty }f_{X}(u)du$ of the r.v. $X\sim Exp(\xi )$
satisfies the so-called relaxation equation%
\begin{equation}
\frac{d}{dx}u(x)=-\xi u(x),\qquad x\geq 0,  \label{rela}
\end{equation}%
with initial condition $u(0)=1.$ As we will see later, replacing the
space-derivative with $\mathcal{D}_{x}^{g}$ corresponds to generalize the
distribution of $X$, inside the class of infinitely divisible r.v.'s.

\begin{lemma}
Let $\mathcal{A}_{g}$ denote the subordinator defined by (\ref{sub}), then
the initial-value problem%
\begin{equation}
\left\{
\begin{array}{l}
\mathcal{D}_{x}^{g}u(x)=-\xi u(x) \\
u(0)=1,%
\end{array}%
\right.  \label{in}
\end{equation}%
with $x\geq 0$, $\xi >0,$ is satisfied by%
\begin{equation}
u(x)=\xi \int_{0}^{+\infty }e^{-\xi t}P\left\{ \mathcal{A}_{g}(t)\geq
x\right\} dt.  \label{pr}
\end{equation}
\end{lemma}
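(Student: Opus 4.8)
The plan is to pass to the Laplace transform in the space variable $x$ and exploit the transform rule (\ref{lapconv}) for $\mathcal{D}_x^g$. Writing $\widetilde{u}(\theta)=\int_0^{+\infty}e^{-\theta x}u(x)\,dx$ and applying (\ref{lapconv}) to both sides of the equation in (\ref{in}), with $u(0)=1$, one obtains the algebraic relation
\begin{equation*}
g(\theta)\widetilde{u}(\theta)-\frac{g(\theta)}{\theta}=-\xi\,\widetilde{u}(\theta),
\end{equation*}
which I would solve for
\begin{equation*}
\widetilde{u}(\theta)=\frac{g(\theta)}{\theta\,(g(\theta)+\xi)}.
\end{equation*}
Thus any sufficiently regular solution of the initial-value problem must have this Laplace transform.

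Next I would verify that the candidate (\ref{pr}) has exactly this transform. Starting from $u(x)=\xi\int_0^{+\infty}e^{-\xi t}P\{\mathcal{A}_g(t)\ge x\}\,dt$ and interchanging the order of the $t$- and $x$-integrations (legitimate by Tonelli's theorem, the integrand being non-negative), the computation reduces to the inner integral
\begin{equation*}
\int_0^{+\infty}e^{-\theta x}P\{\mathcal{A}_g(t)\ge x\}\,dx=\frac{1-\mathbb{E}e^{-\theta\mathcal{A}_g(t)}}{\theta}=\frac{1-e^{-g(\theta)t}}{\theta},
\end{equation*}
where the first equality is the standard Laplace identity for a survival function (again by Fubini) and the second uses the defining property (\ref{sub}) of the subordinator. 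Substituting and integrating in $t$ gives
\begin{equation*}
\widetilde{u}(\theta)=\frac{\xi}{\theta}\int_0^{+\infty}e^{-\xi t}\bigl(1-e^{-g(\theta)t}\bigr)\,dt=\frac{\xi}{\theta}\left(\frac{1}{\xi}-\frac{1}{\xi+g(\theta)}\right)=\frac{g(\theta)}{\theta\,(g(\theta)+\xi)},
\end{equation*}
which coincides with the expression found above. It is also immediate that $u(0)=\xi\int_0^{+\infty}e^{-\xi t}\,dt=1$, since $P\{\mathcal{A}_g(t)\ge 0\}=1$, so the initial condition holds.

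Finally, by injectivity of the Laplace transform, the function (\ref{pr}) is the unique solution of (\ref{in}) in the class where (\ref{lapconv}) applies, which completes the argument. The only genuinely delicate points are bookkeeping ones: justifying that (\ref{lapconv}) may be invoked (i.e.\ that $u$ is of exponential order, so that $\widetilde{u}$ exists for $\mathcal{R}(\theta)>\theta_0$) and fixing a common strip of convergence on which the two transforms are compared; both follow from the boundedness $0\le u(x)\le 1$, evident from the probabilistic representation (\ref{pr}). The core of the proof is therefore the elementary Laplace-domain matching, with the survival-function identity and (\ref{sub}) doing the essential work.
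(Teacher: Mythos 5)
Your proposal is correct and follows essentially the same route as the paper: take the Laplace transform in $x$ of the initial-value problem via (\ref{lapconv}) to get $\widetilde{u}(\theta)=\frac{g(\theta)}{\theta(g(\theta)+\xi)}$, and then match it against the transform of the candidate (\ref{pr}) computed by Fubini/Tonelli and the subordinator identity (\ref{sub}). The only cosmetic difference is that you invoke the survival-function Laplace identity directly, whereas the paper writes out the density $h_{g}(y,t)$ and exchanges the integrals explicitly; the computation is the same.
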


\begin{proof}
We take the Laplace transform of (\ref{in}), by considering (\ref{lapconv}),
and get, for $\mathcal{R}(\theta )>\theta _{0},$%
\begin{equation*}
g(\theta )\widetilde{u}(\theta )-\frac{g(\theta )}{\theta }u(0)=-\xi
\widetilde{u}(\theta ),
\end{equation*}%
so that%
\begin{equation}
\widetilde{u}(\theta )=\frac{g(\theta )}{\theta }\frac{1}{g(\theta )+\xi }.
\label{rr}
\end{equation}%
On the other hand, from (\ref{pr}) we get%
\begin{eqnarray*}
\widetilde{u}(\theta ) &=&\xi \int_{0}^{+\infty }e^{-\theta
x}dx\int_{0}^{+\infty }e^{-\xi t}dt\int_{x}^{+\infty }h_{g}(y,t)dy=\xi
\int_{0}^{+\infty }e^{-\xi t}dt\int_{0}^{+\infty
}h_{g}(y,t)dy\int_{0}^{y}e^{-\theta x}dx \\
&=&\frac{\xi }{\theta }\int_{0}^{+\infty }e^{-\xi t}dt\int_{0}^{+\infty
}h_{g}(y,t)(1-e^{-\theta y})dy=\frac{\xi }{\theta }\left[ \frac{1}{\xi }-%
\frac{1}{\xi +g(\theta )}\right] ,
\end{eqnarray*}%
which is equal to (\ref{rr}). The initial condition is clearly satisfied by (%
\ref{pr}).
\end{proof}

In the trivial case $g(\theta )=\theta $, the differential equation in (\ref%
{in}) reduces to (\ref{rela}), which is satisfied by $\Phi _{X}(x)=e^{-\xi
x}.$ For $g(\theta )=\theta ^{\alpha },$ $\alpha \in (0,1),$ equation (\ref%
{in}) coincides with the well-known fractional relaxation equation (see e.g.
\cite{GLO}, \cite{MAI} and \cite{BEG}). In all the other cases, the solution
corresponds to the survival function of a subordinator stopped at an
independent exponential time, i.e. $\mathcal{A}_{g}(X).$

Alternatively, (\ref{pr}) gives the probability that the subordinator $%
\mathcal{A}_{g}$ hits (or crosses) a certain level in a random time smaller
than an exponentially distributed one. Indeed, by considering that
\begin{equation*}
P\left\{ \mathcal{A}_{g}(t)\geq x\right\} =P\left\{ \mathcal{L}_{g}(x)\leq
t\right\} ,
\end{equation*}%
we can write the solution of (\ref{in}) as
\begin{equation}
u(x)=\xi \int_{0}^{+\infty }e^{-\xi t}P\left\{ \mathcal{L}_{g}(x)\leq
t\right\} dt=P\left\{ \mathcal{L}_{g}(x)\leq X\right\} ,  \label{pl}
\end{equation}%
with $X\sim Exp(\xi ).$ In the special case where $g(\theta )=\sqrt{\theta }$%
, the inverse stable subordinator $\mathcal{L}_{g}$ is equal in distribution
to a Brownian motion reflecting in the origin and thus (\ref{pl}) reduces to
the probability that the Brownian motion is under an exponentially
distributed barrier at time $x$, i,e, $P\left\{ |B(x)|\leq X\right\} $ (see
\cite{BEG}).

Let us now consider the r.v.'s $X_{j}^{g}$, for $j=1,2,...$, with survival
function $\Phi _{X^{(g)}}$ coinciding with (\ref{pr}). They are thus
non-negative, infinitely divisible, with the following Laplace exponent%
\begin{equation}
\psi _{X^{(g)}}(\theta ):=-\log \mathbb{E}e^{-\theta X^{(g)}}=\log \left( 1+%
\frac{g(\theta )}{\xi }\right) ,\qquad \xi ,\theta >0.  \label{be}
\end{equation}%
Clearly, in the special case $g(\theta )=\theta ,$ formula (\ref{be})
reduces to the Laplace exponent of the exponential law. Thus we assume here
that the density of the addends in (\ref{1}) is given by%
\begin{equation}
f_{X^{(g)}}(x)=\xi \int_{0}^{+\infty }e^{-\xi t}h_{g}(x,t)dt,\qquad \xi ,y>0,
\label{lo}
\end{equation}%
(where $h_{g}(\cdot ,t)$ is the density of the subordinator $\mathcal{A}_{g}$%
), or, alternatively, that the following equality in law holds : $X^{(g)}%
\overset{d}{=}\mathcal{A}_{g}(X)$.

\begin{theorem}
The solution to equation%
\begin{equation}
\frac{\partial }{\partial t}f(y,t)=-\lambda \frac{e^{\lambda t}}{\xi }%
\mathcal{D}_{y}^{g}\left[ f\ast f\right] (y,t),\qquad y,t>0,  \label{eq2}
\end{equation}%
under the initial condition $f(y,0)=f_{X^{(g)}}(y),$ is given by the density
function of the process
\begin{equation}
Y_{g}(t)=\sum_{j=1}^{B(t)}X_{j}^{(g)},\qquad t\geq 0.  \label{ef}
\end{equation}

\begin{proof}
We take the Laplace transform of (\ref{eq2}), w.r.t. $y$, so that we get, by
considering (\ref{lapconv}):%
\begin{equation}
\frac{\partial }{\partial t}\widetilde{f}(\theta ,t)=-\lambda \frac{%
e^{\lambda t}}{\xi }g(\theta )\left[ \widetilde{f}(\theta ,t)\right] ^{2},
\label{eq3}
\end{equation}%
with $\widetilde{f}(\theta ,0)=\frac{\xi }{\xi +g(\theta )}.$ We can check,
by differentiating, that the solution to (\ref{eq3}) is equal to%
\begin{equation}
\widetilde{f}(\theta ,t)=\frac{\xi e^{-\lambda t}}{\xi e^{-\lambda
t}+g(\theta )}.  \label{fy}
\end{equation}%
On the other hand we can start from the distribution function of (\ref{ef})
which can be written as
\begin{equation}
F_{Y_{g}}(y,t):=P\{Y_{g}(t)<y\}=\sum_{n=1}^{\infty }P(B(t)=n)F_{X_{g}}^{\ast
(n)}(y),  \label{conv}
\end{equation}%
where $F_{X_{g}}^{\ast (n)}$ denotes the $n$-th convolution of $%
F_{X^{(g)}}(x):=P\{X^{(g)}<x\}$. Under the assumption of absolutely
continuous and positive random addends $X_{j}^{(g)},$ for $j=1,2,..$, with
density $f_{X^{(g)}}(x):=P\{X^{(g)}\in dx\}/dx,$ we can write%
\begin{equation*}
F_{X_{g}}^{\ast (n)}(x)=\int_{0}^{+\infty }F_{X_{g}}^{\ast
(n-1)}(x-z)f_{X}(z)dz.
\end{equation*}%
By denoting $\widetilde{g}(\theta ):=$ $\int_{0}^{+\infty }e^{-\theta
x}g(x)dx$ the Laplace transform of $g:\mathbb{R}^{+}\rightarrow \mathbb{R},$
we get
\begin{equation}
\widetilde{F}_{X_{g}}^{\ast (n)}(\theta )=\left[ \widetilde{F}%
_{X_{g}}(\theta )\right] ^{n}=\frac{\left[ \widetilde{f}_{X_{g}}(\theta )%
\right] ^{n}}{\theta }.  \label{lconv}
\end{equation}%
Therefore we have that%
\begin{eqnarray}
\widetilde{f}_{Y_{g}}(\theta ,t) &=&\sum_{n=1}^{\infty }e^{-\lambda t}\left(
1-e^{-\lambda t}\right) ^{n-1}\left[ \frac{\xi }{\xi +g(\theta )}\right] ^{n}
\label{lt} \\
&=&\frac{\xi e^{-\lambda t}}{\xi e^{-\lambda t}+g(\theta )},  \notag
\end{eqnarray}%
which coincides with (\ref{fy}).
\end{proof}
\end{theorem}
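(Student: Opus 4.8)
The plan is to show two things coincide: the solution of the PDE (via its Laplace transform in space) and the density of the compound birth process $Y_g$ (via its probabilistic representation). The cleanest route is to reduce everything to the same closed-form Laplace transform in the spatial variable $y$, namely $\widetilde f(\theta,t)=\xi e^{-\lambda t}/(\xi e^{-\lambda t}+g(\theta))$, and then appeal to uniqueness of the Laplace transform to conclude that the two functions of $(y,t)$ agree.

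First I would apply the spatial Laplace transform to equation (\ref{eq2}). The key tool is the transform rule (\ref{lapconv}) for the convolution-type derivative $\mathcal{D}_y^g$, together with the convolution theorem, which turns $f\ast f$ into $[\widetilde f(\theta,t)]^2$. Care is needed with the boundary term $g(\theta)u(0)/\theta$ in (\ref{lapconv}): it is applied with $u(\cdot)=(f\ast f)(\cdot,t)$, and since both factors are densities vanishing at the origin, $(f\ast f)(0,t)=0$, so that term drops and the transform of $\mathcal{D}_y^g[f\ast f]$ is simply $g(\theta)[\widetilde f(\theta,t)]^2$. This yields the Riccati-type ODE (\ref{eq3}) in $t$, namely $\partial_t\widetilde f=-\lambda e^{\lambda t}\xi^{-1}g(\theta)[\widetilde f]^2$, with initial value $\widetilde f(\theta,0)=\xi/(\xi+g(\theta))$ obtained by Laplace-transforming the initial density $f_{X^{(g)}}$ through (\ref{be}).

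Next I would solve this separable ODE. Writing it as $-\partial_t(1/\widetilde f)=-\lambda e^{\lambda t}\xi^{-1}g(\theta)$ and integrating from $0$ to $t$ gives $1/\widetilde f(\theta,t)=1/\widetilde f(\theta,0)+\xi^{-1}g(\theta)(e^{\lambda t}-1)$; substituting the initial value and simplifying produces exactly (\ref{fy}). Alternatively, as the theorem suggests, one simply verifies by differentiation that (\ref{fy}) solves (\ref{eq3}), which avoids any integration.

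Finally I would compute the Laplace transform of the density of $Y_g$ directly from its definition (\ref{ef}) by conditioning on $B(t)=n$, exactly as in the exponential Lemma. Using $\widetilde f_{X^{(g)}}(\theta)=\xi/(\xi+g(\theta))$ from (\ref{be}) and the geometric-type mass function $p_n(t)=e^{-\lambda t}(1-e^{-\lambda t})^{n-1}$, the conditional transforms multiply across the $n$-fold convolution, and summing the resulting geometric series in $n$ gives (\ref{lt}), which equals (\ref{fy}). The main (though minor) obstacle is the justification of interchanging the sum over $n$ with the Laplace integral and the handling of the boundary term in the convolution-derivative transform; both are routine here because the series converges for $e^{-\lambda t}\xi/(\xi+g(\theta))<1$ and all densities vanish at the origin. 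Uniqueness of the Laplace transform then identifies the PDE solution with $f_{Y_g}$, completing the proof.
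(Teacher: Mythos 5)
Your proposal is correct and follows essentially the same route as the paper: Laplace transform in $y$ via (\ref{lapconv}) to obtain the Riccati ODE (\ref{eq3}), solve it to get (\ref{fy}), then match this against the transform of $f_{Y_g}$ computed by conditioning on $B(t)=n$ and summing the geometric series. Your explicit observation that the boundary term $g(\theta)(f\ast f)(0,t)/\theta$ vanishes is a detail the paper leaves implicit, but it does not change the argument.
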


As far as the first-passage time $T_{\beta }$ of the process $Y_{g}$ through
the level $\beta >1$ is concerned, we can derive the general formula of the
Laplace transform of its distribution function$,$ by considering (\ref{rel})
and by taking into account (\ref{conv}) and (\ref{lconv}), as follows:%
\begin{eqnarray}
\mathcal{L}\left[ P\{T_{\beta }<t\};\theta \right] &=&\mathcal{L}\left[
P\{Y(t)>\beta \};\theta \right] =\frac{1}{\theta }-\frac{1}{\theta }%
\widetilde{f}_{Y_{g}}(\theta ,t)  \label{III} \\
&=&\frac{g(\theta )}{\theta \left[ e^{-\lambda t}\xi +g(\theta )\right] }.
\notag
\end{eqnarray}

\begin{remark}
The following time-changed representation of the process $Y_{g}$ can be
checked, by proving the equality of the one-dimensional distributions:%
\begin{equation}
Y_{g}(t)\overset{d}{=}\mathcal{A}_{g}(Y(t)),  \label{id}
\end{equation}%
for any $t\geq 0,$ where $Y$ is the compound birth process (with exponential
jumps) defined in (\ref{bi}) and supposed independent of the subordinator $%
\mathcal{A}_{g}.$ Indeed, the Laplace transform of $\mathcal{A}_{g}(Y(t))$
can be written, for any $t\geq 0,$ as
\begin{eqnarray*}
\mathbb{E}e^{-\theta \mathcal{A}_{g}(Y(t))} &=&\int_{0}^{+\infty }\mathbb{E}%
e^{-\theta \mathcal{A}_{g}(z)}f_{Y}(z,t)dz \\
&=&\int_{0}^{+\infty }e^{-g(\theta )z}f_{Y}(z,t)dz \\
&=&[\text{by (\ref{den2})}] \\
&=&\frac{\xi e^{-\lambda t}}{g(\theta )+\xi e^{-\lambda t}},
\end{eqnarray*}%
which coincides with (\ref{lt}). Thus (\ref{id}) follows from the unicity of
the Laplace transform.
\end{remark}

\subsection{Some special cases}

\textbf{(i) The Mittag-Leffler case}

Let $g(\theta )=\theta ^{\alpha }$, for $\alpha \in (0,1],$ then the law of
the addends is Mittag-Leffler of parameters $\alpha ,\xi $ and, in this
special case, we can obtain an explicit and simple formula for the density
of $Y_{g}:=Y_{\alpha }.$ Recall that the Mittag-Leffler function, with two
parameters, is defined as%
\begin{equation}
E_{\beta ,\gamma }(x)=\sum_{j=0}^{\infty }\frac{x^{\beta j}}{\Gamma (\beta
j+\gamma )},\qquad x,\beta ,\gamma \in \mathbb{C},\text{ }\func{Re}\left(
\beta \right) ,\func{Re}\left( \gamma \right) >0.  \label{ml}
\end{equation}%
Then the density of $X^{g}$ can be obtained by applying the well-known
formula of the Laplace transform of the Mittag-Leffler function (see \cite%
{KIL}, formula (1.9.13), for $\rho =1$), i.e.%
\begin{equation}
\mathcal{L}\left\{ x^{\gamma -1}E_{\beta ,\gamma }(Ax^{\beta });s\right\} =%
\frac{s^{\beta -\gamma }}{s^{\beta }-A},  \label{lt2}
\end{equation}%
with $\func{Re}\left( \beta \right) ,\func{Re}\left( \gamma \right) >0,$ $%
A\in \mathbb{R}$ and $s>|A|^{1/\func{Re}\left( \beta \right) }.$ Indeed we
have%
\begin{equation}
f_{X^{(\alpha )}}(x)=\mathcal{L}^{-1}\left\{ \frac{\xi }{\xi +\theta
^{\alpha }};x\right\} =\xi x^{\alpha -1}E_{\alpha ,\alpha }(-\xi x^{\alpha
}),\qquad x,\xi >0,\;\alpha \in (0,1],  \label{rv}
\end{equation}%
which is the density of $\mathcal{A}_{\alpha }(X),$ where $\mathcal{A}%
_{\alpha }$ is the $\alpha $-stable subordinator and $X$ an independent
exponential r.v. The equation (\ref{eq2}), in this case, reduces to the
following space-fractional differential equation of order $\alpha ,$%
\begin{equation}
\frac{\partial }{\partial t}f_{Y_{\alpha }}(y,t)=-\lambda \frac{\partial
^{\alpha }}{\partial y^{\alpha }}\left[ f_{Y_{\alpha }}\ast f_{Y_{\alpha }}%
\right] (y,t),\qquad y,t>0,  \label{sf}
\end{equation}%
with initial condition $f_{Y_{\alpha }}(y,0)=\xi x^{\alpha -1}E_{\alpha
,\alpha }(-\xi y^{\alpha }).$

The derivative appearing in (\ref{sf}) is the Caputo fractional derivative
defined as follows: let $\alpha >0$, $m=\left\lfloor \alpha \right\rfloor +1$
and assume that $u:[a,b]\rightarrow \mathbb{R},$ $b>a,$\ is an absolutely
continuous function, with absolutely continuous derivatives up to order $m$
on $[a,b]$, then, for $x\in \lbrack a,b],$
\begin{equation}
\frac{d^{\alpha }}{dx^{\alpha }}u(x):=\left\{
\begin{array}{l}
\frac{1}{\Gamma (m-\alpha )}\int_{a}^{x}\frac{1}{(x-s)^{\alpha -m+1}}\frac{%
d^{m}}{ds^{m}}u(s)ds,\qquad \alpha \notin \mathbb{N}_{0} \\
\frac{d^{m}}{dx^{m}},\qquad \alpha =m\in \mathbb{N}_{0}%
\end{array}%
\right.  \label{ca}
\end{equation}%
is the Caputo fractional derivative of order $\alpha $ (see \cite{KIL},
p.92). Indeed, for $g(\theta )=\theta ^{\alpha },$ we have that%
\begin{equation}
\mathcal{D}_{x}^{g}=\frac{d^{\alpha }}{dx^{\alpha }},  \label{der}
\end{equation}%
as it can be checked by considering (\ref{toa}) with the L\'{e}vy measure $%
\overline{\nu }(ds)=\alpha s^{-\alpha -1}ds/\Gamma (1-\alpha )$ and the tail
L\'{e}vy measure $\nu (ds)=s^{-\alpha }ds/\Gamma (1-\alpha )$ (see Remark
2.6 in \cite{TOA} for details).

The density of the process (\ref{ef}), which we denote now as $Y_{\alpha },$
is given by
\begin{equation}
f_{Y_{\alpha }}(y,t)=\xi e^{-\lambda t}y^{\alpha -1}E_{\alpha ,\alpha }(-\xi
e^{-\lambda t}y^{\alpha }),\qquad y,t\geq 0,\text{ }\alpha \in (0,1],
\label{tr}
\end{equation}%
as can be obtained by inverting the Laplace transform (\ref{lt}), by means
of (\ref{lt2}), with $g(\theta )=\theta ^{\alpha }$ and $\beta =\gamma
=\alpha .$

The Mittag-Leffler r.v. has infinite moments and thus the same holds for the
process $Y_{\alpha }.$ This is confirmed by the representation $\mathcal{A}%
_{\alpha }(Z)$ of the random addends, since it is well-known that the stable
law has infinite moments of order greater than $\alpha $.

The distribution of the first-passage time through the level $\beta >1$ in
this case can be obtained, by considering (\ref{III}), which reduces to%
\begin{equation}
\mathcal{L}\left[ P\{T_{\beta }<t\};\theta \right] =\frac{\theta ^{\alpha -1}%
}{\theta ^{\alpha }+\xi e^{-\lambda t}}.  \label{lf2}
\end{equation}%
By inverting (\ref{lf2}) we get
\begin{equation}
P\{T_{\beta }<t\}=\left\{
\begin{array}{l}
0,\qquad t\leq 0 \\
E_{\alpha ,1}(-\xi e^{-\lambda t}\beta ^{\alpha }),\qquad t>0%
\end{array}%
\right. ,  \label{lf3}
\end{equation}%
which coincides with (\ref{df}) for $\alpha =1.$ The jump in the origin of (%
\ref{lf3}) coincides with $P\{T_{\beta }=0\}=E_{\alpha ,1}(-\xi \beta
^{\alpha })=P(X^{(\alpha )}>\beta ).$

Moreover, the previous probability converges to zero, as $\beta $ tends to
infinity, but with a power law (instead of exponentially), as can be checked
by recalling the well-known asymptotic behavior of the Mittag-Leffler
function (see \cite{KIL}, formula (1.8.11)), for $|z|\rightarrow +\infty $,
i.e.%
\begin{equation}
E_{\beta ,\gamma }(z)=-\sum_{k=1}^{n}\frac{z^{-k}}{\Gamma (\gamma -\beta k)}%
+O\left( z^{-n-1}\right) ,\quad n\in \mathbb{N},  \label{asy}
\end{equation}%
where $0<\beta <2,$ $\mu \leq \arg (z)\leq \pi $, $\pi \beta /2<\mu <\min
\{\pi ,\pi \beta \}.$ Thus, for $\beta \rightarrow +\infty ,$ we get $%
P\{T_{\beta }<t\}\simeq O(\beta ^{-\alpha }).$

The density of the absolutely continuous component is easily obtained by
taking the first derivative of (\ref{lf3}) and reads%
\begin{equation*}
f_{T_{\beta }}(t)=\frac{\lambda \xi e^{-\lambda t}\beta ^{\alpha }}{\alpha }%
E_{\alpha ,\alpha }(-\xi e^{-\lambda t}\beta ^{\alpha }),\qquad t>0.
\end{equation*}%
We thus obtain the definition of the following fractional extension of the
reflected Gumbel density%
\begin{equation}
f(x)=\frac{e^{-x}}{\alpha E_{\alpha ,\alpha +1}(-\beta ^{\alpha })}E_{\alpha
,\alpha }(-e^{-x}\beta ^{\alpha }),\qquad x>0,  \label{gu}
\end{equation}%
where the normalizing constant is obtained as follows%
\begin{equation*}
\int_{0}^{+\infty }e^{-x}E_{\alpha ,\alpha }(-e^{-x}\beta ^{\alpha
})dx=\int_{0}^{1}E_{\alpha ,\alpha }(-z\beta ^{\alpha })dz=\alpha E_{\alpha
,\alpha +1}(-\beta ^{\alpha }).
\end{equation*}%
The expected first-passage time through $\beta $ can be obtained from (\ref%
{lf3}), as follows:%
\begin{eqnarray*}
\mathbb{E}T_{\beta } &=&-\int_{0}^{+\infty }\sum_{j=1}^{\infty }\frac{(-\xi
e^{-\lambda t}\beta ^{\alpha })^{j}}{\Gamma (\alpha j+1)}dt=\frac{\xi \beta
^{\alpha }}{\lambda }\sum_{l=0}^{\infty }\frac{(-\xi \beta ^{\alpha
})^{l}\left( \Gamma (l+1)\right) ^{2}}{l!\Gamma (\alpha l+\alpha +1)\Gamma
(l+2)} \\
&=&\frac{\xi \beta ^{\alpha }}{\lambda }\,_{2}\Psi _{2}\left[ \left. -\xi
\beta ^{\alpha }\right\vert
\begin{array}{cc}
(1,1) & (1,1) \\
(\alpha +1,\alpha ) & (2,1)%
\end{array}%
\right]
\end{eqnarray*}%
which reduces to (\ref{et}) for $\alpha =1.$ Its asymptotic behavior can be
obtained by applying Theorem 1.2, p.19 in \cite{MAT}, for $\mu =\alpha >0$
and $d=\min \left\{ -1,-1\right\} =-1$ and $\alpha >0$, we get, for $\beta
\rightarrow +\infty ,$%
\begin{eqnarray*}
\mathbb{E}T_{\beta } &=&\frac{\xi \beta ^{\alpha }}{\lambda }\,H_{2,3}^{1,2}%
\left[ \left. \xi \beta ^{\alpha }\right\vert
\begin{array}{cc}
(0,1) & (0,1) \\
(0,1) & (-\alpha ,\alpha )%
\end{array}%
\begin{array}{c}
\, \\
(-1,1)%
\end{array}%
\right] \\
&\simeq &\frac{\xi \beta ^{\alpha }}{\lambda }\,O(\left( \beta ^{\alpha
}\right) ^{-1})=\frac{\xi }{\lambda }\,O(1).
\end{eqnarray*}%
Thus the expected first-passage time through the level $\beta $ converges to
the same limit of the exponential case, for any $\alpha $, even though the
expected value of $Y_{\alpha }$ is infinite, while it is finite for $\alpha
=1$.

\textbf{(ii) The tempered case}

We consider now the case $g(\theta )=(\mu +\theta )^{\alpha }-\mu ^{\alpha }$%
, for $\mu >0,$ which is the Bernstein function of the tempered $\alpha $%
-stable subordinator $\mathcal{A}_{\alpha ,\mu }$, for $\alpha \in (0,1].$
The law of the addends can be written explicitly, by the well-known
relationship between the density of the tempered stable $h_{\alpha ,\mu
}(x,t)$ and that of the stable itself, i.e.%
\begin{equation*}
h_{\alpha ,\mu }(x,t)=\exp \{-\mu x-\mu ^{\alpha }t\}h_{\alpha }(x,t),\qquad
x,t\geq 0.
\end{equation*}%
Since, in this case,
\begin{equation*}
Y_{g}(0)\overset{d}{=}\mathcal{A}_{\alpha ,\mu }(X),
\end{equation*}%
where $X$ is again exponentially distributed with parameter $\xi $ and
independent of $\mathcal{A}_{\alpha ,\mu }$, we have that%
\begin{equation}
f_{X_{\mu }^{(\alpha )}}(x)=\xi e^{-\mu x}\int_{0}^{+\infty }e^{-\xi t-\mu
^{\alpha }t}h_{\alpha }(x,t)dt=\xi e^{-\mu x}x^{\alpha -1}E_{\alpha ,\alpha
}(-(\xi -\mu ^{\alpha })x^{\alpha }),  \label{mu}
\end{equation}%
which generalizes (\ref{rv}), for $\mu \neq 1.$ Analogously to the previous
case, we can write the transition density of the process $Y_{g}:=Y_{\alpha
,\mu }$ as%
\begin{equation}
f_{Y_{\alpha ,\mu }}(y,t)=\xi e^{-\lambda t-\mu y}y^{\alpha -1}E_{\alpha
,\alpha }(-(\xi e^{-\lambda t}-\mu ^{\alpha })y^{\alpha }),\qquad y,t\geq 0,%
\text{ }\alpha \in (0,1].  \label{3.27}
\end{equation}%
Moreover, from Theorem 3.2, we get that (\ref{3.27}) satisfies equation (\ref%
{sf}), where the fractional $\alpha $-order Caputo derivative must be
replaced by the following Caputo-type tempered derivative%
\begin{equation*}
\frac{d^{\alpha ,\mu }}{dx^{\alpha ,\mu }}u(x):=\frac{\alpha \mu ^{\alpha }}{%
\Gamma (1-\alpha )}\int_{0}^{x}\Gamma (-\alpha ;\mu s)\frac{d}{ds}%
u(s)ds,\qquad \alpha \in (0,1),\text{ }\mu >0,
\end{equation*}%
(where $\Gamma (\eta ,x):=\int_{x}^{+\infty }e^{-t}t^{\eta -1}dt$ is the
upper incomplete Gamma function). Indeed the tail L\'{e}vy measure reads $%
\nu (ds)=\frac{\alpha \mu ^{\alpha }\Gamma (-\alpha ;\mu s)ds}{\Gamma
(1-\alpha )}$ (see \cite{TOA}). In this case, the Laplace transform of the
first-passage time distribution (\ref{III}) reduces to%
\begin{equation}
\mathcal{L}\left[ P\{T_{\beta }<t\};\theta \right] =\frac{(\mu +\theta
)^{\alpha }-\mu ^{\alpha }}{\theta \left[ e^{-\lambda t}\xi +(\mu +\theta
)^{\alpha }-\mu ^{\alpha }\right] }.  \label{ll}
\end{equation}%
By inverting (\ref{ll}) and denoting by $\gamma
(a;x):=\int_{0}^{x}e^{-t}t^{a-1}dt$ the lower incomplete Gamma function, we
can write that, for $t>0,$%
\begin{eqnarray*}
P\{T_{\beta } &<&t\}=1-\frac{\xi e^{-\lambda t}}{\mu ^{\alpha }}%
\sum\limits_{j=0}^{\infty }\left( -\frac{\xi e^{-\lambda t}-\mu ^{\alpha }}{%
\mu ^{\alpha }}\right) ^{j}\frac{\gamma (\alpha j+\alpha ;\mu \beta )}{%
\Gamma (\alpha j+\alpha )} \\
&=&1-\frac{\xi \beta ^{\alpha }e^{-\lambda t}}{\mu ^{\alpha }}%
\sum\limits_{j=0}^{\infty }\left( -\frac{\beta ^{\alpha }(\xi e^{-\lambda
t}-\mu ^{\alpha })}{\mu ^{\alpha }}\right) ^{j}E_{1,\alpha j+\alpha +1}(\mu
\beta ).
\end{eqnarray*}%
where we have considered formula (3.7) p.316 in \cite{MIL} together with
formula (4.2.8) in \cite{GOR}.\\
We now compare (in Fig.2) the pdf's of the compound birth process in the two
special cases of Mittag-Leffler and tempered stable addends (given in (\ref%
{tr}) and (\ref{3.27}), respectively) with that of the exponential case:
with respect to the latter, the pdf's fall generally quicker while they have
longer tails. Moreover the presence of the tempering parameter in the
tempered stable case causes both an initial slower fall of the density
compared to the pure $\alpha $-stable case and faster fall for large values
of $y$.

\begin{figure}[]
\subfloat[]{\includegraphics[width =
2in]{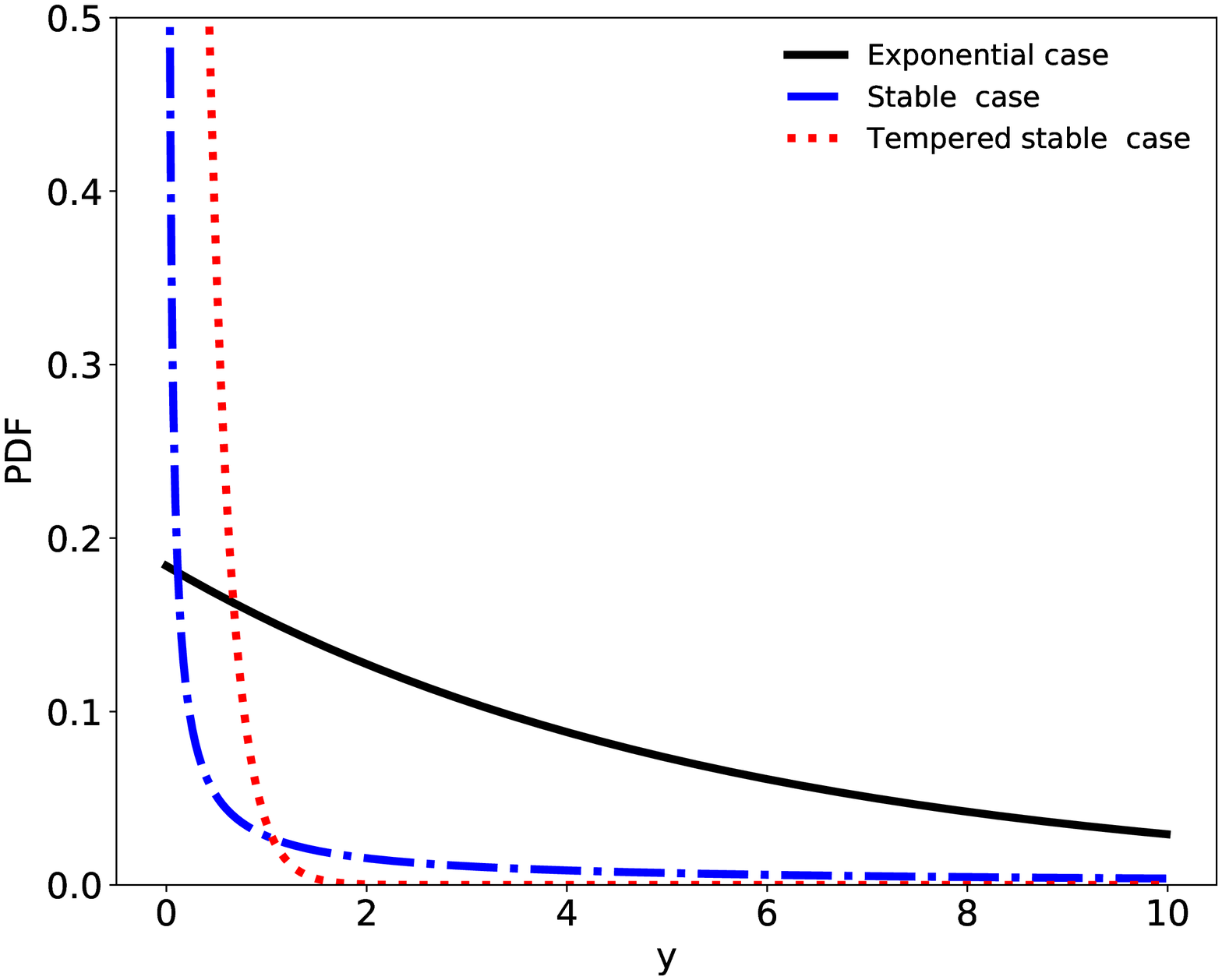}} \subfloat[]{%
\includegraphics[width = 2in]{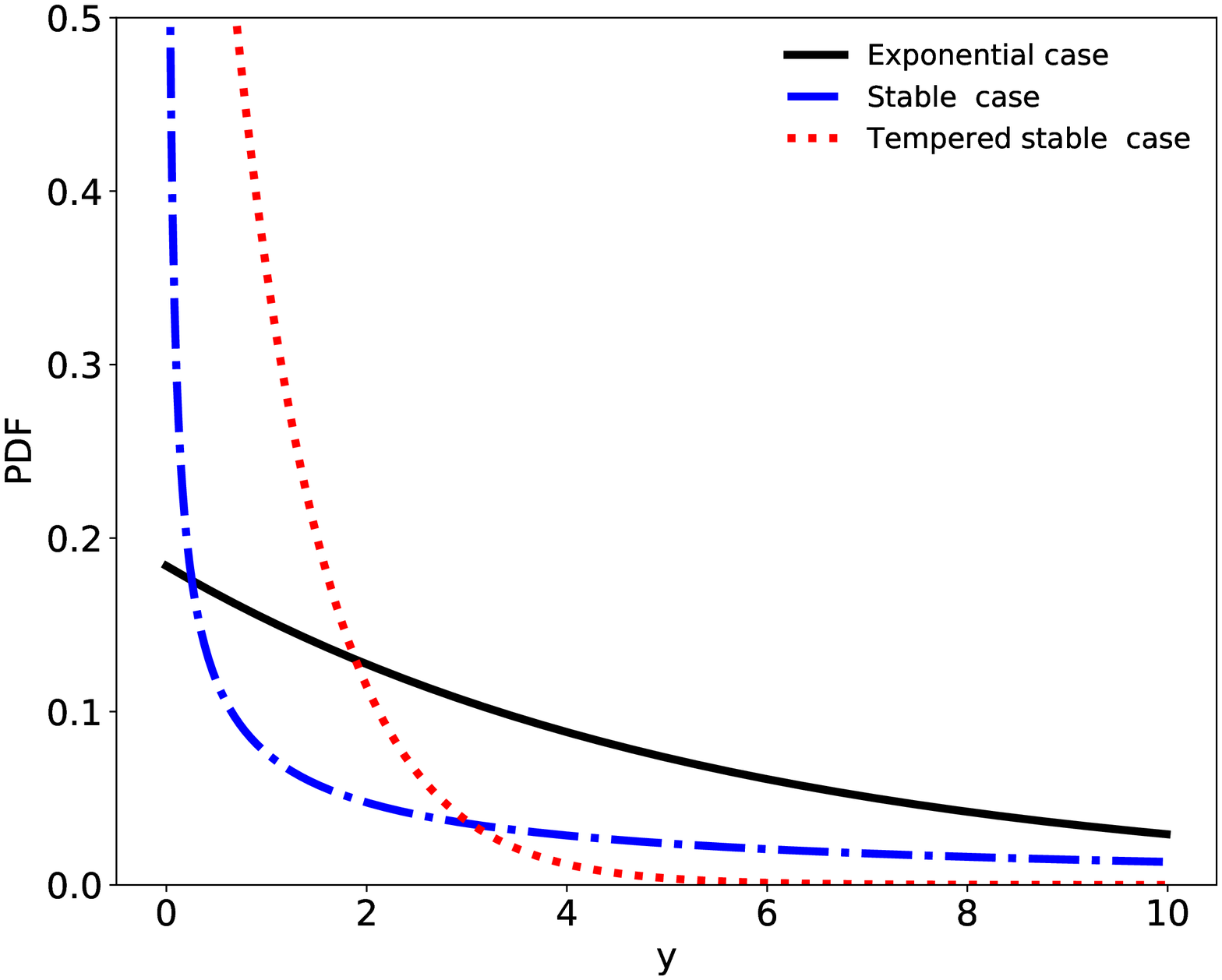}} %
\subfloat[]{\includegraphics[width = 2in]{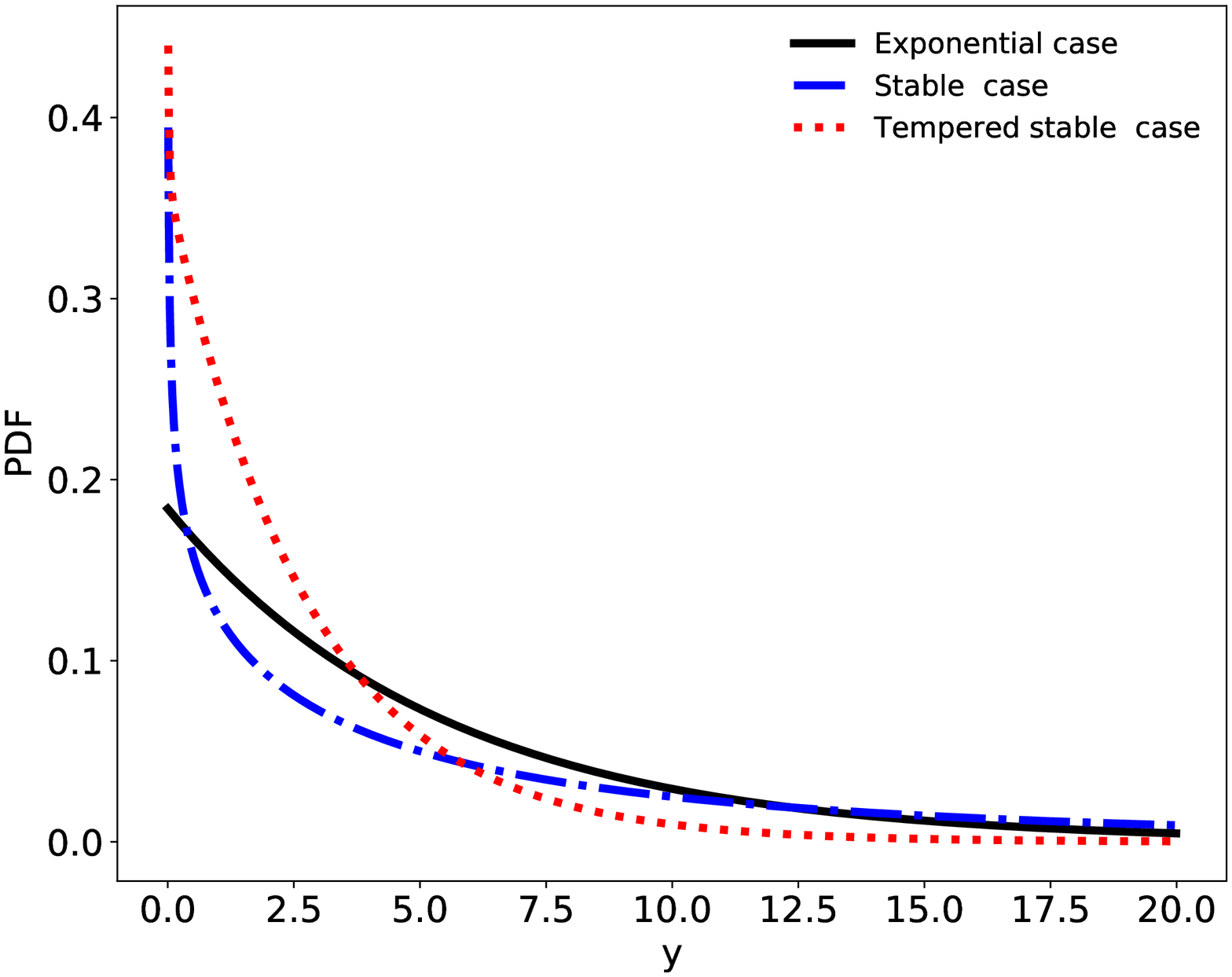}}
\caption{The pdf's of the compound birth process $Y$, with exponential,
Mittag-Leffler and tempered stable addends: for $t=1,\protect\lambda =1,%
\protect\xi =0.5,\protect\alpha =0.2 (a), 0.5 (b), 0.8 (c),\protect\mu =10$.}
\end{figure}

In Fig.3 we explore the influence of the tempering parameter $\mu$ for the
pdf given in (\ref{3.27}), in the tempered case: as expected, the greater
the value of $\mu $ the faster the fall of the tails.
\begin{figure}[]
\centering
\includegraphics[width=.5\textwidth, height =
.3\textwidth]{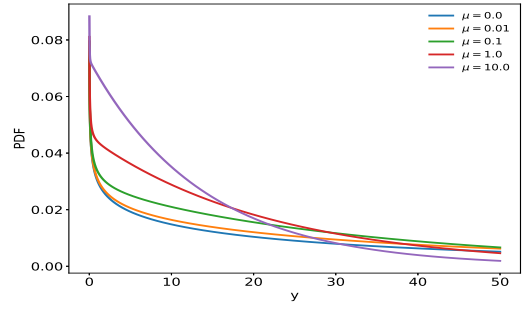}
\caption{The pdf's of the compound birth process $Y_{g}$, with tempered
stable addends, for different values of $\protect\mu $ and for $\protect\xi =%
\protect\lambda =1,\protect\alpha =0.8,t=1$. }
\end{figure}
\\
Finally, we compare the first crossing time probability of $Y$ through the
level $\beta $, in the usual three cases, i.e. exponential, stable and
tempered stable, by plotting it with respect to time (in Fig.\ref{figure4})
and to $\beta $ (in Fig.\ref{figure5}).
\begin{figure}[]
\centering
\includegraphics[width=.5\textwidth, height =
.3\textwidth]{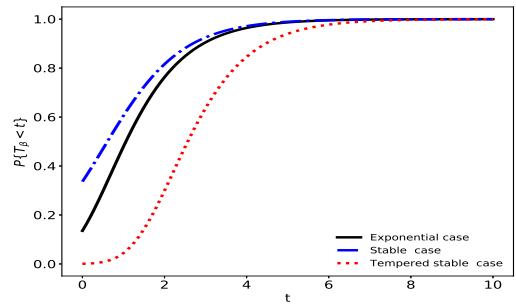}
\caption{The first crossing probability of the level $\protect\beta $, for $%
\protect\mu =5,\protect\xi =\protect\lambda =1,\protect\alpha =0.5,t=1$. }
\label{figure4}
\end{figure}
\\
\begin{figure}[]
\centering
\includegraphics[width=.5\textwidth, height =
.3\textwidth]{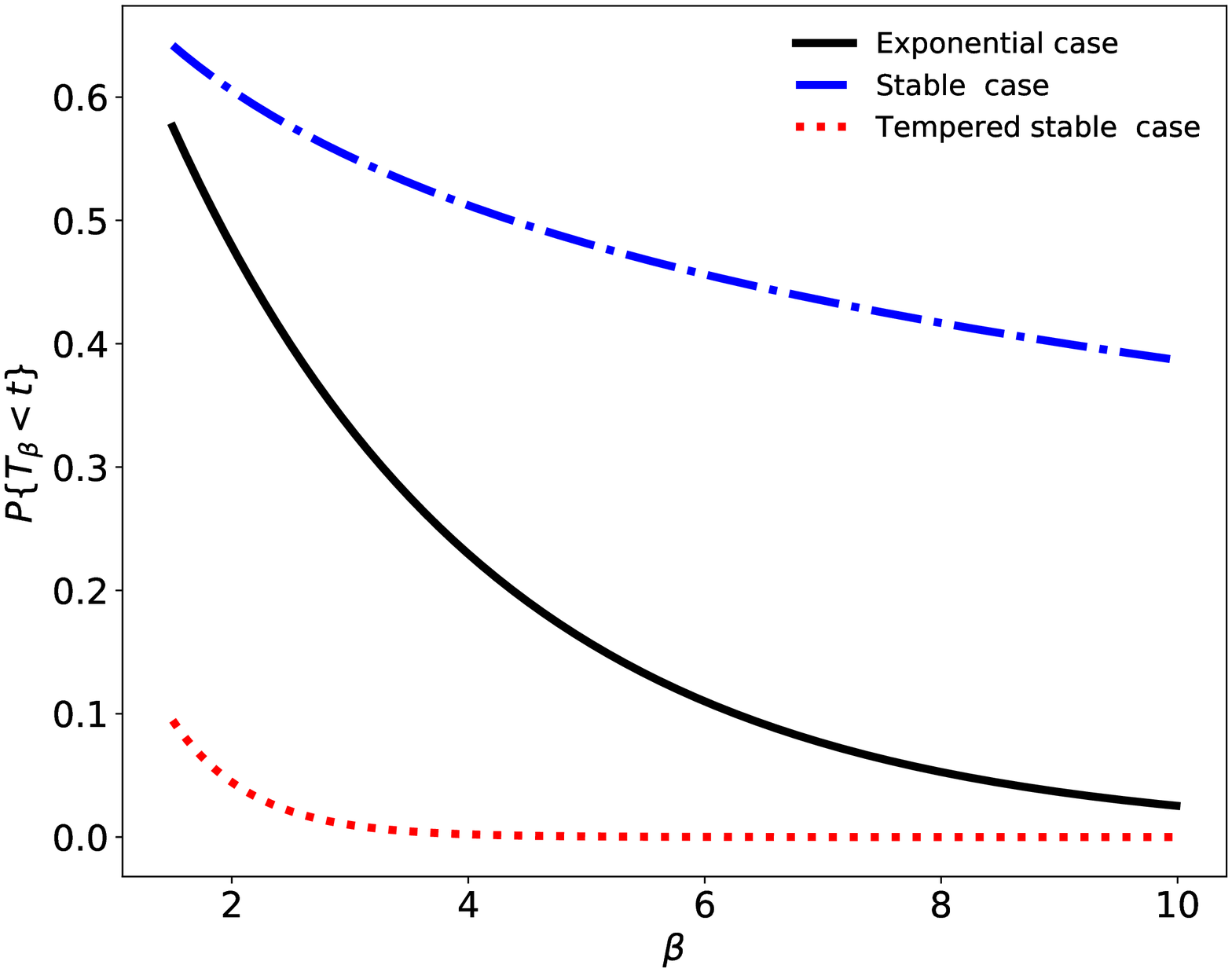}
\caption{The first crossing probability of the level $\protect\beta $=2, for
$\protect\mu =5,\protect\xi =\protect\lambda =1,\protect\alpha =0.5$. }
\label{figure5}
\end{figure}
\\

\textbf{(iii) The gamma case }

For $g(\theta )=\log \left( 1+\frac{\theta }{b}\right) $, for $b>0,$ which
is the Bernstein function associated to the gamma distribution, the law of
the addends $X_{\Gamma }$ can be written as follows:%
\begin{equation*}
f_{X_{\Gamma }}(x)=\xi e^{-bx}\int_{0}^{+\infty }\frac{b^{t}}{\Gamma (t)}%
x^{t-1}e^{-\xi t}dt
\end{equation*}%
with mean value $\mathbb{E}X_{\Gamma }=1/\xi b$ and Laplace transform
\begin{eqnarray}
\widetilde{f}_{X_{\Gamma }}(\theta ) &=&\xi \int_{0}^{+\infty }e^{-bx-\theta
x}\int_{0}^{+\infty }\frac{b^{t}}{\Gamma (t)}x^{t-1}e^{-\xi t}dtdx
\label{inv} \\
&=&\xi \int_{0}^{+\infty }\left( \frac{be^{-\xi }}{b+\theta }\right) ^{t}dt=%
\frac{\xi }{\log (b+\theta )-\log \left( be^{-\xi }\right) }.  \notag
\end{eqnarray}%
In this case, by considering that%
\begin{equation*}
\log \left( 1+\frac{\theta }{b}\right) =\int_{0}^{+\infty }(1-e^{-\theta
x})x^{-1}e^{-bx}dx,
\end{equation*}%
we can write the tail L\'{e}vy measure as%
\begin{equation*}
\nu (ds)=ds\int_{s}^{+\infty }z^{-1}e^{-bz}dz=ds\int_{bs}^{+\infty }\omega
^{-1}e^{-\omega }d\omega =E_{1}(bs)ds,
\end{equation*}%
where $E_{1}(x)=-\func{Ei}(-x)$ and $\func{Ei}(x)$ denotes the exponential
integral. Thus the differential equation governing the process (\ref{bi})
coincides with (\ref{eq}) with convolution-type derivative defined as
follows:%
\begin{equation*}
\mathcal{D}_{t}^{g}u(t):=\int_{0}^{t}\frac{d}{ds}u(t-s)E_{1}(bs)ds.
\end{equation*}%
In this case the Laplace transform of the first-passage time distribution (%
\ref{III}) reduces to%
\begin{equation}
\mathcal{L}\left[ P\{T_{\beta }<t\};\theta \right] =\frac{\log (\theta
+b)-\log b}{\theta \left[ e^{-\lambda t}\xi +\log (\theta +b)-\log b\right] }%
,  \notag
\end{equation}%
which can be inverted by considering (\ref{inv}), as follows, for $t>0$:%
\begin{eqnarray*}
P\{T_{\beta } &<&t\}=\xi e^{-\lambda t}\int_{\beta }^{+\infty
}e^{-by}\int_{0}^{+\infty }\frac{b^{z}}{\Gamma (z)}y^{z-1}\exp \{-\xi
e^{-\lambda t}z\}dzdy \\
&=&\xi e^{-\lambda t}\int_{0}^{+\infty }\frac{\Gamma (z;\beta b)}{\Gamma (z)}%
\exp \{-\xi e^{-\lambda t}z\}dz.
\end{eqnarray*}%
The expected first-passage time through $\beta $ can be written as follows:%
\begin{eqnarray*}
\mathbb{E}T_{\beta } &=&\xi \int_{0}^{+\infty }\frac{\gamma (z;\beta b)}{%
\Gamma (z)}\int_{0}^{+\infty }e^{-\lambda t}\exp \{-\xi e^{-\lambda t}z\}dtdz
\\
&=&\frac{\xi }{\lambda }\int_{0}^{+\infty }\frac{\gamma (z;\beta b)}{\Gamma
(z)}\int_{0}^{1}\exp \{-\xi uz\}dudz \\
&=&\frac{1}{\lambda }\int_{0}^{+\infty }\frac{\gamma (z;\beta b)}{\Gamma
(z+1)}(1-e^{-\xi z})dz.
\end{eqnarray*}%
By applying the monotone convergence theorem and considering that $\frac{%
\gamma (z;\beta b)}{\Gamma (z+1)}\rightarrow \frac{1}{z}$, as $\beta
\rightarrow \infty ,$ it is easy to check that $\mathbb{E}T_{\beta }$ is
infinite in the limit.

\textbf{(iv) The Poisson case }

Let $g(\theta )=\lambda (1-e^{-\theta })$, for $\lambda >0,$ which is the
Bernstein function associated to the Poisson distribution and with L\'{e}vy
measure $\nu (s)=\lambda \delta (s-1)$, where $\delta (\cdot )$ is the
Dirac's delta function. In this case, since the distribution of the addends $%
X^{(\lambda )}$ is discrete and integer valued, we must adapt the notation
of the previous sections: let $p_{x}^{(\lambda )}:=P\{X^{(\lambda )}=x\}$
denote the addends' probability mass function and let $\widetilde{p}%
_{X}^{(\lambda )}(\theta ):=\sum_{x=0}^{\infty }e^{-\theta x}p_{x}^{(\lambda
)}$, then (\ref{lconv}) must be replaced by
\begin{equation*}
\widetilde{F}_{X}^{\ast (n)}(\theta )=\frac{\left[ \widetilde{p}%
_{X}^{(\lambda )}(\theta )\right] ^{n}}{\theta }.
\end{equation*}%
Moreover, formula (\ref{lo}) is substituted by%
\begin{equation*}
p_{x}^{(\lambda )}=\xi \int_{0}^{+\infty }e^{-\xi t}P\{N(t)=x\}dt
\end{equation*}%
Correspondingly, we have, for $Y_{\lambda
}(t)=\sum_{j=1}^{B(t)}X_{j}^{(\lambda )},$ that $q_{x}^{(\lambda
)}(t):=P\{Y_{\lambda }(t)=x\}$ with Laplace transform%
\begin{equation}
\widetilde{q}_{Y}^{(\lambda )}(\theta ):=\sum_{x=0}^{\infty }e^{-\theta
x}q_{x}^{(\lambda )}(t)=\frac{\xi e^{-\lambda t}}{\xi e^{-\lambda
t}+g(\theta )}=\frac{\xi e^{-\lambda t}}{\xi e^{-\lambda t}+\lambda
(1-e^{-\theta })}.  \label{fy2}
\end{equation}

It is easy to see that, in this special case, the addends follow a geometric
distribution of parameter $p=\xi /(\xi +\lambda )$: indeed, here $%
X^{(\lambda )}\overset{d}{=}N(X),$ with $X\sim Exp(\xi )$ independent of $N,$
so that the probability mass function of $X^{(\lambda )}$ can be written as%
\begin{equation}
p_{x}^{(\lambda )}=\xi \int_{0}^{+\infty }e^{-\xi t}P\{N(t)=x\}dt=\frac{\xi
}{\lambda +\xi }\left( \frac{\lambda }{\lambda +\xi }\right) ^{x},\qquad
x=0,1,...
\end{equation}%
The distribution of the process $Y_{\lambda }$ can be written, for $%
y=0,1,... $ and for any $t\geq 0,$ as
\begin{equation}
q_{y}^{(\lambda )}(t)=\frac{\xi e^{-\lambda t}\lambda ^{y}}{y!}%
\int_{0}^{+\infty }e^{-(\xi e^{-\lambda t}+\lambda )z}z^{y}dz=\frac{\xi
e^{-\lambda t}\lambda ^{y}}{\left( \xi e^{-\lambda t}+\lambda \right) ^{y+1}}%
,  \label{nt}
\end{equation}%
which coincides with a geometric probability mass function with parameter $%
p=\xi e^{-\lambda t}/(\xi e^{-\lambda t}+\lambda ).$ This can be checked by
evaluating the Laplace transform of (\ref{nt}), which coincides with (\ref%
{fy2}).

The distribution function of the first-passage time through the level $\beta
$ is given by%
\begin{equation*}
P\{T_{\beta }<t\}=\left\{
\begin{array}{l}
0,\qquad t\leq 0 \\
\left( \frac{\lambda }{\lambda +\xi e^{-\lambda t}}\right) ^{\beta
+1},\qquad t>0%
\end{array}%
\right. ,
\end{equation*}%
which converges to zero, for $\beta \rightarrow +\infty .$

As far as the differential equation satisfied by the survival function of $%
N(X),$ we can specialize the definition (1.10), by considering that $\nu
(ds)=\lambda ds\int_{s}^{+\infty }\delta (x-1)dx=\lambda ds1_{(-\infty
,1]}(s)$ and thus%
\begin{equation*}
\mathcal{D}_{t}^{g}u(t)=\lambda \int_{0}^{t\wedge 1}\frac{d}{ds}%
u(t-s)ds=\lambda \left\{
\begin{array}{l}
u(t)-u(0),\qquad t<1 \\
u(t)-u(t-1),\qquad t\geq 1%
\end{array}%
\right.
\end{equation*}%
Its Laplace transform reads%
\begin{eqnarray}
&&\int_{0}^{+\infty }e^{-\theta t}\lambda \lbrack u(t)-u\left( (t-1)\vee
0\right) ]dt \\
&=&\lambda \widetilde{u}(\theta )-\lambda \int_{0}^{1}e^{-\theta
t}u(0)dt-\lambda \int_{1}^{+\infty }e^{-\theta t}u(t-1)dt  \notag \\
&=&\lambda (1-e^{-\theta })\widetilde{u}(\theta )-\frac{\lambda
(1-e^{-\theta })}{\theta }u(0),  \notag
\end{eqnarray}%
which coincides with (\ref{lapconv}), for this choice of $g.$ As a
consequence, by taking into account Lemma 3.1, we obtain that the survival
function of the addends $P\{X^{(\lambda )}\geq x\}$ satisfies the following
equation (for $x=1,2,..)$%
\begin{equation*}
(\lambda +\xi )u(x)=\lambda u(x-1)
\end{equation*}%
with $u(0)=1,$ as can be checked also directly by considering that $%
P\{X^{(\lambda )}\geq x\}=\left( \frac{\lambda }{\lambda +\xi }\right) ^{x},$
in this case. Theorem 3.2 can be formulated as follows, with the convention
that $q_{-1}(t)=0$: the solution to the initial value problem%
\begin{eqnarray}
\frac{\partial }{\partial t}q_{y}(t) &=&-\lambda ^{2}\frac{e^{\lambda t}}{%
\xi }\left[ \left( q_{y}(t)\ast q_{y}(t)\right) -\left( q_{y-1}(t)\ast
q_{y-1}(t)\right) \right]  \label{eq4} \\
&=&-\lambda ^{2}\frac{e^{\lambda t}}{\xi }[I-\Delta ]\left( q_{y}(t)\ast
q_{y}(t)\right)  \notag
\end{eqnarray}%
(with $t\geq 0,$ $y=0,1,...,$ and $q_{y}(0)=\xi \lambda ^{y}/(\xi +\lambda
)^{y+1}$)$,$ coincides with (\ref{nt}). This can be checked either directly,
by considering that $q_{y}^{(\lambda )}(t)\ast q_{y}^{(\lambda )}(t)=\xi
^{2}e^{-2\lambda t}\lambda ^{y}y/(\xi e^{-\lambda t}+\lambda )^{y+2},$ or by
taking the Laplace transform and verifying that the discrete analogue of (%
\ref{eq3}) holds in this case, i.e.%
\begin{equation}
\frac{\partial }{\partial t}\widetilde{q}_{Y}(\theta )=-\lambda ^{2}\frac{%
e^{\lambda t}}{\xi }(1-e^{-\theta })\left[ \widetilde{q}_{Y}(\theta )\right]
^{2},
\end{equation}%
for any $t\geq 0.$

\


\begin{thebibliography}{99}
\bibitem{BEG} Beghin L., Fractional relaxation equations and Brownian
crossing probabilities of a random boundary, \emph{Adv. Appl. Probab.},
2012, 44, 479- 505.

\bibitem{BEMA} Beghin, L., Macci, C., Alternative forms of compound
fractional Poisson processes, \emph{Abstr. Appl. Anal.}, 2012, ID 747503, 30
p.

\bibitem{LUD} Buchak K., Sakhno L., Compositions of Poisson and gamma
processes, \emph{Modern Stochastics: Theory and Applications,} 2017, 4 (2),
161-188.

\bibitem{CAPO} Cahoy D.O., Polito F., Simulation and estimation for the
fractional Yule process, \emph{Methodology and Computing in
Applied Probability}, 2012, 14 (2), 383-403.

\bibitem{CAH} Cahoy D.O., Uchaikin V.V., Woyczynski W.A., Parameter
estimation for fractional Poisson processes, \emph{J. Statist. Plann.
Inference}, 2010, 140(11), 3106-3120.

\bibitem{CRO} Crowder M., \emph{Multivariate Survival Analysis and Competing
Risks, }2012, Taylor and Francis.

\bibitem{DEV} Devroye L., Random variate generation for exponentially and
polynomially tilted stable distributions, \emph{ACM Trans. Model. Comput.
Simul.}, 2009, 19(4), 1-20.

\bibitem{GAJ} Gajda J., Wy\l oma\'{n}ska A., Time-changed Ornstein-Uhlenbeck
process, \emph{J. Phys. A: Math. Theor}., 2015, 48, 1-19.

\bibitem{GLO} Glockle W.G., Nonnenmacher T.F., Fox function representation
of non-Debye relaxation processes, \emph{J Stat Phys., }1993, 71 (3/4),
741-757.

\bibitem{GOR} Gorenflo R. , Kilbas A.A., Mainardi F., Rogosin S.V., \emph{%
Mittag-Leffler Functions, Related Topics and Applications, }2014,
Springer-Verlag, Berlin Heidelberg.

\bibitem{HOF} Hofert M., Sampling exponentially tilted stable distributions,
\emph{ACM Trans. Model. Comput. Simul.}, 2011, 22(1) 3-11.

\bibitem{KIL} Kilbas A.A., Srivastava H.M., Trujillo J.J.,\textbf{\ }\emph{%
Theory and Applications of Fractional Differential Equations}, vol. 204 of
North-Holland Mathematics Studies, 2006, Elsevier Science B.V., Amsterdam.

\bibitem{KOC} Kochubei A.N., General fractional calculus, evolution
equations and renewal processes, \emph{Integral Equations Operator Theory, }%
2011,71, 583-600.

\bibitem{LAN} Lansky P., Polito F., Sacerdote L., Generalized nonlinear Yule
models, \emph{J Stat Phys.,} 2016, 165, 661--679.

\bibitem{MAH} Maheshwari A., Vellaisamy P., Fractional Poisson process
time-changed by L{\'e}vy subordinator and its inverse, \emph{Journal of
Theoretical Probability}, 2019, 32(3), 1278-1305.

\bibitem{MAI} Mainardi F., Mura A., Gorenflo R., Stojanovi\'{c} M., The two
forms of fractional relaxation of distributed order, \emph{Journal of
Vibration and Control}, 2007, 13, (9-10), 1249-1268.

\bibitem{MAT} Mathai A.M., Saxena R.K., Haubold H.J., \emph{The H-Function:
Theory and Applications}, 2010, Springer, New York.

\bibitem{MIL} Miller K.S., Ross B., \emph{An Introduction to the Frcational
Calculus and Fractional Differential equations, }1993, John Wiley and Sons,
New York.

\bibitem{ORPO} Orsingher E., Polito F., Fractional pure birth process, \emph{%
Bernoulli}, 2010, 16, (3), 858-881.

\bibitem{ORPO3} Orsingher E., Polito F., On a fractional linear birth-death
process. \emph{Bernoulli,} 2011, 17, (1), 114--137.

\bibitem{TOA2} Orsingher E., Ricciuti C., Toaldo B., On semi-Markov
processes and their Kolmogorov's integro-differential equations, \emph{%
Journ. Funct. Analysis}, 2018, 275 (4), 830-868.

\bibitem{MIN} Sendova K.P., Minkova L.D., Introducing the nonhomogeneous
compound-birth process, \emph{Stochastics, }2019, 1-19.

\bibitem{SOB} Sobczyk K., Spencer, J., \emph{Random Fatigue: From Data to
Theory}, 1992, Academic Press, New York.

\bibitem{TOA} Toaldo B., Convolution-type derivatives, hitting-times of
subordinators and time-changed $C_{0}$-semigroups, \emph{Potential Analysis}%
, 2015, 42, 1, 115-140.
\end{thebibliography}
\end{document}